\documentclass[a4paper, 12pt]{amsart}
\usepackage{amsmath, amssymb, eucal, amscd, amstext}

\textwidth=15cm
\textheight=23cm
\hoffset=-1cm
\voffset=-0.5cm

\parskip=.3\baselineskip

\usepackage{enumerate}

\vfuzz2pt 
\hfuzz2pt 

\newtheorem{theorem}{Theorem}[section]
\newtheorem{corollary}[theorem]{Corollary}

\newtheorem{proposition}[theorem]{Proposition}

\theoremstyle{definition}
\newtheorem{definition}[theorem]{Definition}

\theoremstyle{remark}

\newtheorem{example}[theorem]{Example}
\numberwithin{equation}{section}


\begin{document}

\baselineskip=15pt

\title[Isometric embeddings of Banach bundles]{Isometric embeddings of Banach bundles}

\author{Ming-Hsiu Hsu \and Ngai-Ching Wong}

\address{Department of Applied Mathematics, National Sun Yat-sen University,
Kaohsiung, 80424, Taiwan.} \email[Ming-Hsiu Hsu]{hsumh@math.nsysu.edu.tw}
\email[Ngai-Ching Wong]{wong@math.nsysu.edu.tw}

\thanks{This work
is jointly supported by a
Taiwan NSC Grant (NSC96-2115-M-110-004-MY3).}


\keywords{Isometries, Banach bundles, bundle isomorphisms, Banach-Stone type theorems}

\subjclass[2000]{46B40,46E40, 46M20}

\begin{abstract}
We show in this paper that every bijective linear isometry between the continuous section spaces of two non-square
 Banach bundles gives rise to a Banach bundle isomorphism.
This is to support our expectation that the geometric structure of the continuous section space of
 a Banach bundle determines completely its bundle structures.
We also describe the structure of
an \emph{into} isometry from a continuous section space into an other.
However, we demonstrate by
 an example that a non-surjective linear isometry can be far away from a subbundle embedding.
\end{abstract}

\maketitle

\section{Introduction}\label{intro}

Let $\langle B_X, \pi_X\rangle$ be a Banach bundle over a locally compact Hausdorff space $X$.
For each $x$ in $X$, denote by $B_x=\pi_X^{-1}(x)$ the fiber Banach space.
A \emph{continuous section} $f$ of the Banach bundle $\langle B_X, \pi_X\rangle$ is a continuous function
 from $X$ into $B_X$ such that $\pi_X(f(x)) = x$, i.e., $f(x)\in B_x$ for all $x$ in $X$.
Denote by $\Gamma_X$  the Banach space of all continuous sections of $\langle B_X, \pi_X\rangle$
 vanishing at infinity, i.e. those $f$ with $\lim\limits_{x\rightarrow\infty}\|f(x)\|=0$.

Let $\langle B_Y, \pi_Y\rangle$ be an other Banach bundle over a locally compact Hausdorff space $Y$ with
 continuous section space $\Gamma_Y$.
Assume that  $\Gamma_X$ is isometrically isomorphic to $\Gamma_Y$ as Banach spaces.
We want to assert whether $\langle B_X, \pi_X\rangle$ is isometrically isomorphic to $\langle B_Y, \pi_Y\rangle$ as
 Banach bundles (see \S 2 for definitions).
In other words, we expect that the geometric structure of the continuous sections of a Banach bundle determines
 its bundle structure.

\begin{example}\label{ex:TriLine}
(Trivial line bundles)
Let $B_X = X\times \mathbb K$ and $B_Y=Y\times \mathbb K$, where the underlying field $\mathbb K$ is either
 the real $\mathbb R$ or the complex $\mathbb C$.
The continuous section spaces are $C_0(X)$ and $C_0(Y)$, the Banach spaces of continuous scalar functions vanishing
 at infinity, respectively.
The classical Banach-Stone Theorem (see, e.g., \cite{Beh79}) asserts that every linear isometry $T$ from
 $C_0(X)$ onto $C_0(Y)$ is a weighted composition operator:
\begin{eqnarray}\label{eq:wco}
Tf(y)=h(y)f(\varphi(y)), \quad\forall \ f\in C_0(X), \ y\in Y.
\end{eqnarray}
Here, $\varphi$ is a homeomorphism from $Y$ onto $X$, and $h$ is a continuous scalar function on $Y$ with
 $|h(y)|=1$, $\forall \,y\in Y$.
This induces an isometric bundle isomorphism $\Phi:B_X\rightarrow B_Y$ from $B_X=X\times\mathbb K$ onto
 $B_Y=Y\times\mathbb K$ defined by
 $$
 \Phi(x,\alpha)=(\varphi^{-1}(x),h(\varphi^{-1}(x))\alpha), \quad \forall \ (x,\alpha)\in X\times\mathbb K.
 $$
Hence, the trivial line bundles $\langle X\times\mathbb K,\pi_X\rangle$ and $\langle Y\times\mathbb K,\pi_Y\rangle$
 are isometrically isomorphic if and only if they have isometrically isomorphic continuous section spaces.
\end{example}

Recall that a Banach space $E$ is \emph{strictly convex} if $\|x+y\|<2$ whenever
$x\neq y$ in $E$ with $\|x\|=\|y\|=1$.
A Banach space $E$ is said to be \emph{non-square} if $E$ does not contain a copy of the two-dimensional space
 ${\mathbb K}\oplus_\infty{\mathbb K}$ equipped with the norm $\|(a,b)\|=\max\{|a|,|b|\}$.
In other words, if $x$ and $y$ are unit vectors in $E$, at least one of $\|x+y\|$ and $\|x-y\|$ is less
 than $2$.
Note that a Banach space $E$ is non-square if $E$ or its dual $E^*$ is strictly convex.

\begin{example}\label{ex:Tri}
(Trivial bundles)
Let $E$ and $F$ be Banach spaces.
We consider the trivial bundles $B_X= X\times E$ and $B_Y=Y\times F$.
The continuous section spaces are $C_0(X,E)$ and $C_0(Y,F)$, the Banach spaces of continuous vector-valued functions
 vanishing at infinity, respectively.
If $E$ and $F$ are strictly convex, by a result of Jerison \cite{Jerison} we know that every linear isometry $T$ from
 $C_0(X,E)$ onto $C_0(Y,F)$ is of the form:
\begin{eqnarray}\label{eq:wco2}
Tf(y)=h_y(f(\varphi(y))), \quad\forall \ f\in C_0(X,E), \ y\in Y.
\end{eqnarray}
Here, $\varphi$ is a homeomorphism from $Y$ onto $X$, and $h_y$ is a linear isometry
 from $E$ onto $F$ for all $y$ in $Y$.
Moreover, the map $y\mapsto h_y$ is SOT continuous on $Y$. In the
case both the Banach dual spaces $E^*$ and $F^*$ are strictly
convex, Lau gets the same representation
 \eqref{eq:wco2} in \cite{Lau}.
It is further extended that the same conclusion holds whenever $E$ and $F$ are non-square in \cite{Jeang03}
 or the centralizers of $E$ and $F$ are one dimensional in \cite{Beh79}.
The representation (1.2) induces an isometric bundle isomorphism $\Phi:B_X\rightarrow B_Y$ from $B_X=X\times E$ onto
 $B_Y=Y\times F$ defined by
 $$
 \Phi(x,e)=(\varphi^{-1}(x),h_{\varphi^{-1}(x)}(e)), \quad \forall \ (x,e)\in X\times E.
 $$
Hence, the trivial bundles $\langle X\times E,\pi_X\rangle$ and $\langle Y\times F,\pi_Y\rangle$ are
 isometrically isomorphic if and only if they have isometrically isomorphic continuous section spaces.
We note that if $E$ or $F$ is not non-square, the above assertion \eqref{eq:wco2} can be false
 as shown in Example \ref{ex:onto}.
\end{example}

In this paper, we discuss the general Banach bundle case.
Motivated by Example \ref{ex:Tri}, we call a Banach bundle $\langle B_X, \pi_X\rangle$ \emph{non-square}
 (resp.\ \emph{strictly convex}) if every fiber Banach space $B_x=\pi_X^{-1}(x)$ is non-square
 (resp.\ strictly convex).
The proof of the following theorem will be given in Section \ref{pf}.

\begin{theorem}\label{thm:onto}
Two non-square Banach bundles $\langle B_X, \pi_X\rangle$ and $\langle B_Y, \pi_Y\rangle$ are isometrically isomorphic
 as Banach bundles if and only if their continuous section spaces $\Gamma_X$ and $\Gamma_Y$ are isometrically
 isomorphic as Banach spaces.
\end{theorem}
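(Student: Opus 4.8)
One implication is immediate: if $\Phi\colon B_X\to B_Y$ is a Banach bundle isomorphism covering a homeomorphism $\varphi\colon X\to Y$, then $f\mapsto\Phi\circ f\circ\varphi^{-1}$ is a bijective linear isometry of $\Gamma_X$ onto $\Gamma_Y$. The substance of the theorem is the reverse implication, so assume $T\colon\Gamma_X\to\Gamma_Y$ is a bijective linear isometry and let us reconstruct a bundle isomorphism. The plan is the classical Banach--Stone strategy, as already visible in Example~\ref{ex:Tri}: examine the adjoint of $T$ on extreme points, exploit non-squareness to recover the fibring, and then glue the resulting fibrewise isometries into a bundle map. For each $x\in X$ the evaluation $\delta_x\colon\Gamma_X\to B_x$, $f\mapsto f(x)$, is a metric surjection with kernel $N_x=\{f\in\Gamma_X:f(x)=0\}$, so $\delta_x^*$ is an isometric embedding of $B_x^*$ onto $(N_x)^\perp\subseteq\Gamma_X^*$ identifying $B_{B_x^*}$ with $(N_x)^\perp\cap B_{\Gamma_X^*}$ and hence preserving extreme points. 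A standard fact about section spaces of Banach bundles is that $\operatorname{ext}B_{\Gamma_X^*}=\bigcup_{x\in X}\delta_x^*(\operatorname{ext}B_{B_x^*})$; write $p_{x,\phi}=\phi\circ\delta_x$ and call $x$ the \emph{carrier} of $p_{x,\phi}$. Since $T^*\colon\Gamma_Y^*\to\Gamma_X^*$ is an isometric weak*-homeomorphism, it maps $\operatorname{ext}B_{\Gamma_Y^*}$ bijectively onto $\operatorname{ext}B_{\Gamma_X^*}$.

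The crux is that ``having the same carrier'' is an intrinsic property of the Banach space $\Gamma_X^*$ once the fibres are non-square. If $p=p_{x_1,\phi_1}$ and $q=p_{x_2,\phi_2}$ with $x_1\neq x_2$, then a Urysohn/partition-of-unity argument — prescribing the value of a section at $x_1$ and at $x_2$ independently, with norm control elsewhere, which is possible for sections vanishing at infinity over a locally compact Hausdorff space — yields $\|\alpha p+\beta q\|=|\alpha|+|\beta|$ for all scalars $\alpha,\beta$; in particular $\|p+\lambda q\|=2$ for every unimodular $\lambda$. Conversely, if $p=p_{x,\phi_1}$ and $q=p_{x,\phi_2}$ share the carrier $x$ and $p$ is not a scalar multiple of $q$, then the non-squareness of the fibre $B_x$ forces $\|\phi_1+\lambda\phi_2\|_{B_x^*}<2$, i.e.\ $\|p+\lambda q\|<2$, for some unimodular $\lambda$. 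Thus two inequivalent extreme functionals share a carrier if and only if $\|p+\lambda q\|<2$ for some unimodular $\lambda$ — a condition manifestly preserved by the linear isometry $T^*$. Hence $T^*$ induces a bijection $\varphi\colon Y\to X$ between the carrier sets. Endowing the carrier sets with the quotient, via the carrier map, of the weak* topology on the extreme functionals, one verifies — again a standard feature of Banach bundle section spaces — that they are homeomorphic to $Y$ and $X$ respectively, and that $\varphi$ (which intertwines the carrier maps) is then a homeomorphism; the same argument applied to $(T^{-1})^*$ shows $\varphi^{-1}$ continuous.

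To recover the fibres, fix $y\in Y$. Since $T^*$ takes the extreme functionals with carrier $y$ onto those with carrier $\varphi(y)$, and the weak*-closed spans of these are $(N_y)^\perp\cong B_y^*$ and $(N_{\varphi(y)})^\perp\cong B_{\varphi(y)}^*$ (Krein--Milman), the restriction of $T^*$ is a surjective, weak*-continuous linear isometry $B_{\varphi(y)}^*\to B_y^*$, hence $T^*|_{(N_y)^\perp}=h_y^*$ for a surjective linear isometry $h_y\colon B_{\varphi(y)}\to B_y$. Unwinding the identifications gives $(Tf)(y)=h_y(f(\varphi(y)))$ for all $f\in\Gamma_X$ and $y\in Y$, the bundle analogue of \eqref{eq:wco2}. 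Define $\Phi\colon B_X\to B_Y$ by $\Phi(b)=h_{\varphi^{-1}(\pi_X(b))}(b)$; it is a fibrewise surjective isometry covering the homeomorphism $\varphi^{-1}$, so it only remains to check that $\Phi$ and $\Phi^{-1}$ are continuous. Here the bundle topology re-enters: for a net $b_\alpha\to b$ in $B_X$, pick $f\in\Gamma_X$ with $f(\pi_X(b))=b$, so that $\|b_\alpha-f(\pi_X(b_\alpha))\|\to0$ by the Banach bundle axioms; then $\|\Phi(b_\alpha)-(Tf)(\varphi^{-1}(\pi_X(b_\alpha)))\|\to0$, while $(Tf)(\varphi^{-1}(\pi_X(b_\alpha)))\to(Tf)(\varphi^{-1}(\pi_X(b)))=\Phi(b)$ because $Tf$ is a continuous section. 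The argument for $\Phi^{-1}$ is symmetric, using $T^{-1}$. Thus $\Phi$ is an isometric Banach bundle isomorphism, completing the reverse implication.

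The main obstacle is the converse half of the carrier characterisation in the second paragraph: that a non-square fibre $B_x$ admits no two inequivalent extreme functionals $\phi_1,\phi_2\in\operatorname{ext}B_{B_x^*}$ with $\|\phi_1+\lambda\phi_2\|_{B_x^*}=2$ for all unimodular $\lambda$ — equivalently, that such a configuration would produce an isometric copy of $\mathbb K\oplus_\infty\mathbb K$ inside $B_x$. This is a purely fibrewise convexity statement, but it is delicate because the norms $\|\phi_1+\lambda\phi_2\|$ are attained only in the bidual $B_x^{**}$, so one must use the extremality of $\phi_1$ and $\phi_2$ together with weak* compactness to pull the witnessing vectors back into $B_x$ itself. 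This, together with the identification of the extreme functionals and of the carrier topology, is where most of the preparatory work leading up to Section~\ref{pf} should be concentrated.
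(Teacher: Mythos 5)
Your architecture is genuinely different from the paper's: you work with $T^*$ on the extreme points of the dual ball and try to characterise ``same carrier'' intrinsically by the norm, whereas the paper scalarises $T$ by composing with $\Psi:\Gamma_Y\to C_0(K_Y)$, $\Psi(g)(y,\nu)=\nu(g(y))$, applies the \emph{into}-isometry theorem to $\Psi\circ T$, and only afterwards invokes non-squareness. Unfortunately the step you yourself single out as the main obstacle --- that for a non-square fibre $B_x$ two inequivalent extreme points $\phi_1,\phi_2$ of $B_{B_x^*}$ must satisfy $\|\phi_1+\lambda\phi_2\|<2$ for some unimodular $\lambda$ --- is false, and it cannot be repaired by pulling witnesses back from the bidual, because they genuinely need not exist in $B_x$. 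Concretely, let $E=C[0,1]$ with the norm $\|f\|=\|f\|_\infty+\|f\|_{L^2}$. This norm is strictly convex (equality in the triangle inequality forces equality for the $L^2$ part, hence proportionality), so $E$ is non-square. Its dual ball is $B_{E^*}=B_{M[0,1]}+B_{L^2}$, the sum of two weak*-compact convex sets whose support function is $\|f\|_\infty+\|f\|_{L^2}$; a Lebesgue-decomposition argument shows that $\phi_i=\delta_{t_i}+g_i\,dt$ is a norm-one extreme point of $B_{E^*}$ whenever $\|g_i\|_{L^2}=1$. Taking $t_1=0$ and $t_2=1$, every representation of $\phi_1\pm\phi_2$ as $\mu+h\,dt$ with $h\in L^2$ must put the singular part $\delta_0\pm\delta_1$, of total variation $2$, into $\mu$, so $\|\phi_1\pm\phi_2\|_{E^*}=2$ although $\phi_1\neq\pm\phi_2$. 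Thus your relation $\sim$ does not detect carriers, the map $\varphi$ is not well defined by your argument, and the whole reconstruction collapses at that point.

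The structural reason is that $\|\phi_1+\lambda\phi_2\|=2$ for all unimodular $\lambda$ only produces an isometric $\ell_1^2$ inside $B_x^*$, equivalently an isometric $\ell_\infty^2$ \emph{quotient} of $B_x$ (a square in $B_x^{**}$); non-squareness does not pass to quotients or biduals unless the fibre is reflexive or uniformly non-square, neither of which is assumed. The paper, following \cite{Jeang03}, avoids this by manufacturing the square from \emph{vectors in the fibre} rather than functionals: if $y$ had two carriers $x_1\neq x_2$, one picks norm-one sections $f_1,f_2$ supported near $x_1,x_2$ with disjoint supports, so that $\|af_1+bf_2\|=\max(|a|,|b|)$, and checks that $u=Tf_1(y)$, $v=Tf_2(y)$ are unit vectors of $B_y$ with $\nu_1(v)=\nu_2(u)=0$, whence $\operatorname{span}\{u,v\}$ is an isometric $\ell_\infty^2$ inside the non-square fibre $B_y$ --- a contradiction obtained without ever leaving $B_y$. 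Any repair of your proof must import this section-level information at the carrier-comparison step; the purely intrinsic dual characterisation is not available under the stated hypotheses. The remaining components of your proposal (the easy direction, surjectivity and isometry of the $h_y$ by applying the same analysis to $T^{-1}$, and the continuity of $\Phi$ via a section through $b$ together with the bundle axioms, which is exactly the use of Proposition~\ref{prop}) do match the paper's proof.
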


We also consider the case when the continuous section space $\Gamma_X$ is embedded into $\Gamma_Y$
 as a Banach subspace.
We want to see whether $\langle B_X,\pi_X\rangle$ embedded into $\langle B_X,\pi_X\rangle$ as a subbundle.
Assume $F$ is strictly convex.
It is shown in \cite{Cam78b, Hol66, Jeang97} that every linear isometry from $C_0(X,E)$ \emph{into} $C_0(Y,F)$
 induces a continuous function $\varphi$ from a nonempty subset $Y_1$ of $Y$ onto $X$
 and a field $y\mapsto h_y$ of norm one linear operators from $E$ into $F$ on $Y_1$, such that
\begin{eqnarray}\label{eq:into1}
Tf(y)=h_y(f(\varphi(y))), \quad\forall \ f\in C_0(X,E), \ y\in Y_1,
\end{eqnarray}
and
\begin{eqnarray}\label{eq:into2}
\|Tf|_{Y_1}\|=\sup\{\|Tf(y)\|: y\in Y_1\} = \|f\|, \quad \forall \ f\in C_0(X,E).
\end{eqnarray}
When $F$ is not strictly convex, the conclusion does not hold (see \cite{Jeang97}).

In Theorem \ref{thm:into}, we extend the above representation \eqref{eq:into1} and \eqref{eq:into2} to the
 general strictly convex Banach bundle case.
Supposing all $h_y$ are isometries, we can consider $\langle
B_X,\pi_X\rangle$ to be embedded into
 $\langle B_Y,\pi_Y\rangle$ as a subbundle.
However, in Example \ref{ex:into} we have a linear into isometry between trivial bundles with all fiber maps
 $h_y$ not being isometric.

\section{Preliminaries}

Let $\mathbb K=\mathbb R$ or $\mathbb C$ be the underlying field.
Let $X$ be a locally compact Hausdorff space.
A \emph{Banach bundle} (see, e.g. \cite{Fell}) over $X$ is a pair $\langle B_X,\pi_X\rangle$,
 where $B_X$ is a topological space and $\pi_X$ is a continuous open surjective map from $B_X$ onto $X$, such that,
 for all $x$ in $X$, each fiber $B_x=\pi_X^{-1}(x)$ carries a Banach space structure
 in the subspace topology and satisfying
 the following conditions:
\begin{enumerate}[(1)]
    \item Scalar multiplication, addition and the norm on $B_X$ are all continuous wherever they are defined.
    \item If $x\in X$ and $\{b_i\}$ is any net in $B_X$ such that $\|b_i\|\rightarrow0$ and
          $\pi(b_i)\rightarrow x$ in $X$, then $b_i\rightarrow0_x$ (the zero element of $B_x$) in $B_X$.
\end{enumerate}
The condition (2) above ensures that the zero section is in $\Gamma_X$.

\begin{definition}(\cite{Fell}, p.128)\label{def:iso}
A Banach bundle $\langle B_X,\pi_X\rangle$ is said to be \emph{isometrically isomorphic} to a Banach bundle
 $\langle B_Y,\pi_Y\rangle$ if there are homeomorphisms $\Phi : B_X \to B_Y$ and $\psi : X \to Y$ such that
\begin{enumerate}[(a)]
    \item $\pi_Y\circ\Phi=\psi\circ\pi_X$, i.e., $\Phi(B_x)=B_{\psi(x)}, \ \forall \ x\in X$;
    \item $\Phi|_{B_x}$ is a linear map from $B_x$ onto $B_{\psi(x)}, \ \forall \ x\in X$;
    \item $\Phi$ preserves norm, i.e., $\|\Phi(b)\| = \|b\|, \ \forall \ b\in B_X$.
\end{enumerate}
\end{definition}

Clearly, all the fiber linear maps $\Phi|_{B_x}$ are surjective isometries.
In fact, an isometrical bundle isomorphism  $(\Phi,\psi): \langle B_X,\pi_X\rangle \to \langle B_Y,\pi_Y\rangle$
 induces a linear isometry $T$ from $\Gamma_X$ onto $\Gamma_Y$ defined
 by setting $\varphi=\psi^{-1}: Y\to X$, $h_y=\Phi|_{B_{\varphi(y)}}: B_{\varphi(y)}\to B_y$, and
\begin{eqnarray}\label{eq:basicform}
Tf(y) = \Phi(f(\varphi(y)))= h_y(f(\varphi(y))), \quad\forall \ f\in\Gamma_X,\ y\in Y.
\end{eqnarray}
In other words, isometrically isomorphic Banach bundles have isometrically isomorphic continuous section spaces.
We want to establish the converse of this observation.

In general, let $\varphi:Y\rightarrow X$ be a continuous  map, and let
 $y\mapsto h_y$ be a field of  fiber linear maps  $h_y: B_{\varphi(y)}\to B_y$, $\forall \,y\in Y$.
 We can define a linear map $T$ sending vector sections $f$ in $\langle B_X, \pi_X\rangle$ to
 vector sections $Tf$ in $\langle B_Y, \pi_Y\rangle$  by setting
 $Tf(y)=h_y(f(\varphi(y))), \forall y\in Y$.
The field $y\mapsto h_y$ is said to be \emph{continuous} if  $y_{\lambda}\rightarrow y$ implies
 $h_{y_{\lambda}}(f(\varphi(y_{\lambda})))\rightarrow h_y(f(\varphi(y)))$, and \emph{uniformly bounded}
 if $\sup_{y\in Y} \|h_y\| < +\infty$. When $B_X=X\times E$ and $B_Y=Y\times F$, the continuity of a  field  $y\mapsto h_y$ of
fiber linear maps reduces to the usual SOT continuity.
In general, assuming $\varphi$ is proper, i.e.,
 $\lim_{y\to\infty}\varphi(y)=\infty$,
 if the field $y\mapsto h_y$ is uniformly bounded and
 continuous on $Y$, then
 $T(\Gamma_X)\subseteq \Gamma_Y$.
Conversely, we will see in Theorem \ref{thm:into} that every linear into isometry
$T: \Gamma_X\to\Gamma_Y$ defines a
   continuous field  $y\mapsto h_y$ of fiber linear maps with all $\|h_y\|=1$, provided that
   $\langle B_Y, \pi_Y\rangle$ is
strictly convex.

In terms of Banach bundles, Example \ref{ex:TriLine} says that trivial line bundles are completely determined
 by the geometric structure of its continuous sections.
It is also the case for trivial Banach bundles $X\times E$ and $Y\times F$ whenever $E$ and $F$ are non-square,
 as demonstrated in Example \ref{ex:Tri}.
In attacking the general Banach bundle case, we need the following result of Fell \cite{Fell}.

\begin{proposition}$($\cite{Fell}, $p.129)$\label{prop}
Let $\{s_i\}$ $(i\in I)$ be a net of elements of $B_X$ and $s$ an element of $B_X$ such that
 $\pi_X(s_i)\rightarrow\pi_X(s)$. Suppose further that for each $\epsilon>0$ we can find a net $\{u_i\}$ of
 elements of $B_X$ (indexed by the same $I$) and an element $u$ of $B_X$ such that:
 $(1)$ $u_i\rightarrow u$ in $B_X$,
 $(2)$ $\pi_X(u_i)=\pi_X(s_i)$ for each $i$,
 $(3)$ $\|s-u\|<\epsilon$, and
 $(4)$ $\|s_i-u_i\|<\epsilon$ for all large enough $i$.
Then $s_i\rightarrow s$ in $B_X$.
\end{proposition}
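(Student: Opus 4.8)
The plan is to verify the topological definition of convergence directly: fixing an arbitrary open neighborhood $W$ of $s$ in $B_X$, I would show that $s_i\in W$ for all sufficiently large $i$. The guiding decomposition is $s_i=u_i+(s_i-u_i)$, which is legitimate because hypothesis (2) forces $\pi_X(u_i)=\pi_X(s_i)$, so $s_i-u_i$ lives in the same fiber $B_{\pi_X(s_i)}$ as $u_i$. Since $u_i\to u$ with $u$ close to $s$, and since $s_i-u_i$ has small norm and base point tending to $x:=\pi_X(s)$, the strategy is to feed these two pieces into the continuity of addition (condition (1) of a Banach bundle) evaluated at the point $(s,0_x)$, where $s+0_x=s\in W$. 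Continuity then supplies open sets $N_s\ni s$ and $N_0\ni 0_x$ in $B_X$ with $a+b\in W$ whenever $a\in N_s$, $b\in N_0$ and $\pi_X(a)=\pi_X(b)$.

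Before assembling this, I would record two consequences of condition (2). First, the ``tubes'' $\{b\in B_X:\pi_X(b)\in V,\ \|b\|<\delta\}$, as $V$ ranges over open neighborhoods of $x$ and $\delta>0$, form a neighborhood base at $0_x$: otherwise some $N_0$ would, for every pair $(V,\delta)$, contain a point $b_{V,\delta}\notin N_0$ with $\pi_X(b_{V,\delta})\in V$ and $\|b_{V,\delta}\|<\delta$, and the resulting net would satisfy $\|b_{V,\delta}\|\to 0$ and $\pi_X(b_{V,\delta})\to x$ yet avoid $N_0$, contradicting condition (2). Second, within the single fiber $B_x$, norm-closeness to $s$ forces topological closeness: for any neighborhood $N_s$ of $s$ there is $\delta'>0$ with $\{u\in B_x:\|u-s\|<\delta'\}\subseteq N_s$; again by contradiction, a net $u_k\in B_x$ with $\|u_k-s\|\to 0$ but $u_k\notin N_s$ would give $u_k-s\to 0_x$ by condition (2), whence $u_k=(u_k-s)+s\to s$ by continuity of addition, a contradiction. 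Note also that $\pi_X(u)=x$ is automatic: hypothesis (3) only makes sense when $s-u$ is defined, and continuity of $\pi_X$ gives $\pi_X(u)=\lim_i\pi_X(u_i)=\lim_i\pi_X(s_i)=x$.

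To finish, given $W$ I would choose $N_s,N_0$ as above, then pick $\delta'$ from the second fact and $(V,\delta)$ from the first, and apply the hypothesis with $\epsilon:=\min\{\delta',\delta\}$ to obtain the net $\{u_i\}$ and the element $u$. Since $\|u-s\|<\epsilon\le\delta'$ and $\pi_X(u)=x$, the second fact gives $u\in N_s$; as $N_s$ is open and $u_i\to u$, we get $u_i\in N_s$ eventually. Meanwhile $s_i-u_i$ satisfies $\|s_i-u_i\|<\epsilon\le\delta$ eventually and $\pi_X(s_i-u_i)=\pi_X(s_i)\in V$ eventually (because $\pi_X(s_i)\to x$), so the first fact gives $s_i-u_i\in N_0$ eventually. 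Since $\pi_X(u_i)=\pi_X(s_i-u_i)$, continuity of addition yields $s_i=u_i+(s_i-u_i)\in W$ for all large $i$, as desired. The main obstacle is the tension between the fixed $\epsilon$ and the genuine limit we want: the remainder $s_i-u_i$ only has norm bounded by $\epsilon$ rather than tending to $0$, so condition (2) cannot be applied to it directly. This is exactly what the tube neighborhood base at $0_x$ circumvents, and it is why $\epsilon$ must be chosen only after $W$ (equivalently $N_s,N_0$) has been fixed.
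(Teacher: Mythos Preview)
The paper does not supply its own proof of this proposition; it is quoted verbatim from Fell--Doran with only the citation. Your argument is correct and is essentially the standard one: reduce to the continuity of addition at $(s,0_x)$, then control the two summands $u_i$ and $s_i-u_i$ using, respectively, the norm topology on the fiber $B_x$ and the tube neighborhoods of $0_x$ furnished by axiom~(2). The order of quantifiers---fixing $W$, extracting $N_s,N_0$, and only then choosing $\epsilon$---is exactly the point, and you flag it clearly.

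One small wording slip: in your derivation of the tube base at $0_x$ you write ``some $N_0$ would \ldots\ contain a point $b_{V,\delta}\notin N_0$,'' which is self-contradictory as stated; you mean that the \emph{tube} $\{b:\pi_X(b)\in V,\ \|b\|<\delta\}$ contains such a point. The intended argument is clear and valid.
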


\section{The Results}\label{pf}

First, we discuss the \emph{into} isometry case.  We shall write  $E^*$ and $S_E$ for the Banach dual space and the
unit sphere of a Banach space $E$, respectively.

\begin{theorem}\label{thm:into}
Suppose $\langle B_X, \pi_X\rangle$ and $\langle B_Y, \pi_Y\rangle$ are Banach bundles such that
 $\langle B_Y, \pi_Y\rangle$ is strictly convex.
Let $T:\Gamma_X\rightarrow\Gamma_Y$ be a linear \emph{into} isometry.
Then there exist a continuous map $\varphi$ from a nonempty subset $Y_1$ of $Y$ onto $X$,
 and a field of norm one linear operators $h_y:B_{\varphi(y)}\rightarrow B_y$, for all $y$ in $Y_1$, such that
$$
Tf(y)=h_y(f(\varphi(y))), \quad \forall \ f \in \Gamma_X, \ y \in Y_1,
$$
and
$$
\|Tf|_{Y_1}\|=\|Tf\|, \quad \forall \ f \in\Gamma_X.
$$
\end{theorem}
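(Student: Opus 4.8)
The plan is to run the extreme-point (Holsztynski-type) method at the level of section spaces of Banach bundles. Throughout write $\delta_x\colon\Gamma_X\to B_x$, $\delta_x f=f(x)$, for the evaluation maps, and recall from \cite{Fell} that a Banach bundle over a locally compact Hausdorff space has enough continuous sections: every $v\in B_x$ equals $f(x)$ for some $f\in\Gamma_X$, and such an $f$ may be chosen with $\|f\|$ as close to $\|v\|$ as we wish (multiply a section through $v$ by a $C_0(X)$-bump at $x$). The first step is to identify the extreme points of the closed unit ball $B_{\Gamma_X^*}$ of $\Gamma_X^*$. Using that $\Gamma_X$ is a faithful $C_0(X)$-module with enough sections, one proves the bundle version of the classical description of $B_{C_0(X,E)^*}$ in the same way — an extreme functional $\Phi$ satisfies $\|g\cdot\Phi\|+\|(1-g)\cdot\Phi\|=\|\Phi\|$ for every $0\le g\le 1$ in $C_0(X)$, where $(g\cdot\Phi)(f):=\Phi(gf)$, which forces $\Phi$ to be localized at a single point — namely
\[
\operatorname{ext}B_{\Gamma_X^*}=\{\,x^*\circ\delta_x:\ x\in X,\ x^*\in\operatorname{ext}B_{B_x^*}\,\},
\]
distinct pairs giving distinct functionals, and the same for $\Gamma_Y$. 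I shall also use that $\|v\|=\sup\{|x^*(v)|:x^*\in\operatorname{ext}B_{B_x^*}\}$ for $v\in B_x$ (Krein--Milman in $B_x^*$).

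Next, since $T$ is an isometric embedding, Hahn--Banach shows that $T^*$ carries $B_{\Gamma_Y^*}$ onto $B_{\Gamma_X^*}$, and the standard face argument (if $p$ is extreme then $(T^*)^{-1}(p)\cap B_{\Gamma_Y^*}$ is a nonempty weak$^*$-compact convex set, whose extreme points exist by Krein--Milman, are extreme in $B_{\Gamma_Y^*}$, and are mapped by $T^*$ to $p$) gives $\operatorname{ext}B_{\Gamma_X^*}\subseteq T^*\!\left(\operatorname{ext}B_{\Gamma_Y^*}\right)$. Combined with the description above, this says: for every $x\in X$ and $x^*\in\operatorname{ext}B_{B_x^*}$ there are $y\in Y$ and $y^*\in\operatorname{ext}B_{B_y^*}$ with
\[
y^*\!\left(Tf(y)\right)=x^*\!\left(f(x)\right)\qquad(f\in\Gamma_X);
\]
writing $\Lambda_y:=\delta_y\circ T\colon\Gamma_X\to B_y$ (a contraction), this reads $y^*\circ\Lambda_y=x^*\circ\delta_x$, so in particular $\|\Lambda_y\|=1$. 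Let $Y_1$ be the (nonempty) set of all $y\in Y$ admitting such a relation.

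The heart of the matter, and the step I expect to be the main obstacle, is to show that for $y\in Y_1$ the operator $\Lambda_y$ is \emph{localized at the single point $x$}: $\Lambda_y f=0$ whenever $f(x)=0$. This is exactly where strict convexity of the fibre $B_y$ is used, and it is indispensable — without it the conclusion genuinely fails, as \cite{Jeang97} shows. The scalar functional $y^*\circ\Lambda_y=x^*\circ\delta_x$ is plainly $C_0(X)$-localized at $x$ (its value on $gf$ is $g(x)$ times its value on $f$); the difficulty is to transfer this localization from $y^*\circ\Lambda_y$ to $\Lambda_y$ itself. The idea: if $f(x)=0$ but $\Lambda_y f\neq 0$, one builds — using the bundle axioms and the gluing criterion of Proposition~\ref{prop} — norm-one sections $g$ supported in shrinking neighbourhoods $U$ of $x$ with $x^*(g(x))\to 1$; the estimate $\|g\pm tf\|\le 1+|t|\sup_U\|f\|$ (valid for $U$ small, since $f(x)=0$) makes the vectors $\Lambda_y g$ and $\Lambda_y g\pm t\Lambda_y f$ of $B_y$ nearly maximize $y^*$ on the unit ball, which by strict convexity of $B_y$ can only happen if $\Lambda_y f=0$. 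Making this rigorous — in particular handling the possible failure of $y^*$ or $x^*$ to attain its norm, and passing where necessary to weak$^*$ cluster points in $B_y^{**}$ along the face cut out by the extreme functional $y^*$ — is the main technical burden of the proof. Granting it, $\Lambda_y$ factors through $\delta_x$, yielding a point $\varphi(y):=x$ and a linear map $h_y\colon B_{\varphi(y)}\to B_y$ with $Tf(y)=h_y(f(\varphi(y)))$; $\varphi(y)$ is well defined since two distinct admissible points would force $\Lambda_y=0$, against $\|y^*\circ\Lambda_y\|=1$; and $\|h_y\|\le 1$ from $\|Tf(y)\|\le\|f\|$ together with the freedom in choosing sections, while $\|h_y\|\ge\|y^*\circ h_y\circ\delta_{\varphi(y)}\|=\|x^*\circ\delta_x\|=1$, so $\|h_y\|=1$.

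It remains to collect the global properties. Surjectivity $\varphi(Y_1)=X$ and $Y_1\neq\emptyset$ are immediate from the transfer step, since every pair $(x,x^*)$ produces some $y\in Y_1$ with $\varphi(y)=x$. For continuity of $\varphi$ on $Y_1$: if $y_\lambda\to y$ in $Y_1$ but $\varphi(y_\lambda)\not\to\varphi(y)$, pass in the one-point compactification of $X$ to a subnet with $\varphi(y_\lambda)\to x'\neq\varphi(y)$ or $\varphi(y_\lambda)\to\infty$; choosing $f\in\Gamma_X$ with $h_y(f(\varphi(y)))\neq 0$, and in the first case supported away from $x'$, yields $Tf(y_\lambda)=h_{y_\lambda}(f(\varphi(y_\lambda)))\to 0$ (eventually $0$ in the first case; of norm $\le\|f(\varphi(y_\lambda))\|\to 0$ in the second, as $f$ vanishes at infinity), contradicting $Tf(y_\lambda)\to Tf(y)$. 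Finally, for $f\in\Gamma_X$,
\[
\|Tf|_{Y_1}\|=\sup_{y\in Y_1}\|h_y(f(\varphi(y)))\|\le\sup_{x\in X}\|f(x)\|=\|f\|=\|Tf\|,
\]
and for the reverse inequality each pair $(x,x^*)$ furnishes $y\in Y_1$ with $\varphi(y)=x$ and $\|Tf(y)\|=\|h_y(f(x))\|\ge|y^*(h_y(f(x)))|=|x^*(f(x))|$, whose supremum over all $x$ and all $x^*\in\operatorname{ext}B_{B_x^*}$ is $\|f\|$; hence $\|Tf|_{Y_1}\|=\|Tf\|$. (The field $y\mapsto h_y$ is then continuous in the sense of Section~2 for free, since $h_y(f(\varphi(y)))=Tf(y)$ and $Tf\in\Gamma_Y$.)
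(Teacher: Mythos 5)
Your high-level architecture (produce $Y_1$ and $\varphi$, factor $\delta_y\circ T$ through $\delta_{\varphi(y)}$, then verify $\|h_y\|=1$, the continuity of $\varphi$, and $\|Tf|_{Y_1}\|=\|Tf\|$) is the right one, and your route via $\operatorname{ext}B_{\Gamma_X^*}$ and the surjectivity of $T^*$ on dual balls is a legitimate alternative to the paper's, which instead works with the peak sets $S_{x,\mu}=\{f:\mu(f(x))=\|f\|=1\}$ and $Q_{x,\mu}=\{y: T(S_{x,\mu})\subseteq R_{y,\nu}\text{ for some }\nu\}$ in the style of Holszty\'nski and of \cite{Jeang97, Jeang03}. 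The peripheral steps (well-definedness and continuity of $\varphi$, $\|h_y\|=1$, the norm identity on $Y_1$) are essentially correct in both treatments.

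However, you have not proved the one step that carries all the weight, and the sketch you give of it would not survive being made rigorous. You write ``Granting it'' for the localization claim ($f(\varphi(y))=0\Rightarrow Tf(y)=0$), which is precisely where strict convexity of the fibers enters; a proof that grants its own main technical burden has a gap. Worse, the strategy you outline is flawed: you want to conclude that $\Lambda_yg\pm t\Lambda_yf$, which \emph{nearly} maximize $y^*$ on a ball of radius $1+t\epsilon$, must be close to each other. Strict convexity is not a quantitative property: it says the face $\{b:\|b\|\le 1,\ y^*(b)=1\}$ of the closed unit ball of $B_y$ has at most one point, but it provides no modulus, so approximate maximizers of $y^*$ need not cluster (that would require uniform convexity, or that $y^*$ strongly exposes a point). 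Your fallback of passing to weak$^*$ cluster points in $B_y^{**}$ does not repair this, because strict convexity of $B_y$ does not make the corresponding face of the unit ball of $B_y^{**}$ a singleton. The way to avoid all of this --- and the reason the paper phrases everything through $S_{x,\mu}$ --- is to work with \emph{exact} peaks: choose $g\in S_{x,\mu}$ supported where a Urysohn bump $h$ at $x$ equals $1$; then for $f$ with $f(x)=0$ and $s$ small, the sections $g\pm s(1-h)f$ have norm exactly $1$ (disjoint supports) and still lie in $S_{x,\mu}$, so their images at $y$ all coincide with the \emph{unique} point of the face $\{b\in S_{B_y}:\nu(b)=1\}$ (the only place strict convexity is needed, and only as a singleton statement); subtracting gives $T((1-h)f)(y)=0$, while $\|T(hf)(y)\|\le\|hf\|\le\sup_U\|f\|\to 0$. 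Exact norm attainment is always available in a Banach bundle by multiplying a section through a given fiber vector by a bump, so the non-attainment difficulty you flag never actually arises. Finally, your description of $\operatorname{ext}B_{\Gamma_X^*}$ is itself only asserted, and both inclusions you need are nontrivial for general Banach bundles; the peak-set route bypasses that as well.
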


\begin{proof}
We employ the notations developed in \cite{Jeang97, Jeang03}.
For $x$ in $X$, $y$ in $Y$, $\mu$ in $S_{B_x^*}$ and $\nu$ in $S_{B_y^*}$, we set
$$
S_{x,\mu}=\{f\in\Gamma_X : \ \mu(f(x))=\|f\|=1\},
$$
$$
R_{y,\nu}=\{g\in\Gamma_Y : \ \nu(g(y))=\|g\|=1\},
$$
$$
Q_{x, \mu}=\{y\in Y:T(S_{x,\mu})\subseteq R_{y,\nu} \text{ for some $\nu$ in $S_{B_y^*}$}\},
$$
and
$$
Q_x=\bigcup\limits_{{\mu\in S_{B_x^*}}}Q_{x, \mu}.
$$
As in \cite{Jeang03}, it is not difficult to see that
\begin{enumerate}[(a)]
    \item For all $x$ in $X$, the set $S_{x,\mu}\neq\emptyset$ for some $\mu$ in $S_{B_x^*}$;
    \item If $S_{x, \mu}\neq\emptyset$, then so is $Q_{x, \mu}$.
\end{enumerate}
By the strict convexity of $\langle B_Y,\pi_Y\rangle$, we have
\begin{enumerate}[(a)]\setcounter{enumi}{2}
    \item $Q_{x_1}\bigcap Q_{x_2}=\emptyset$ for all $x_1\neq x_2$.
\end{enumerate}
Set
$$
Y_1=\bigcup\limits_{x\in X}Q_x=\bigcup\limits_{x\in X}\bigcup\limits_{{\mu\in S_{B_x^*}}}Q_{x, \mu}.
$$
   From (c), we can define a map $\varphi$ from $Y_1$ onto $X$ by
$$
\varphi(y)=x \quad \mbox{if} \quad y\in Q_x.
$$
Using the strict convexity of $\langle B_Y,\pi_Y\rangle$ again, we also have
\begin{enumerate}[(a)]\setcounter{enumi}{3}
    \item $f(\varphi(y))=0$ implies $Tf(y)=0$, i.e. $\ker\delta_{\varphi(y)}\subseteq\ker(\delta_y\circ T)$.
\end{enumerate}
Then there exists a linear operator $h_y:B_{\varphi(y)}\rightarrow B_y$ such that
$$
\delta_y\circ T=h_y\circ\delta_{\varphi(y)}, \quad \forall \ y \in Y_1.
$$
In other words,
$$
Tf(y)=h_y(f(\varphi(y))), \quad \forall \ f \in \Gamma_X, \ y \in Y_1.
$$

For all $b$ in $B_{\varphi(y)}$, choose an element $f$ in $\Gamma_X$ such that $f(\varphi(y))=b$ and $\|f\|=\|b\|$.
It follows
$$
\|h_y(b)\|=\|h_y(f(\varphi(y)))\|=\|Tf(y)\|\leq\|Tf\|=\|f\|=\|b\|.
$$
Since $y\in Y_1$, there exist $x$ in $X$, $\mu$ in $S_{B_x^*}$ and $\nu$ in $S_{B_y^*}$ such that
$$
\nu(Tf(y))=\mu(f(x))=1, \quad \forall\ f \in S_{x,\mu},
$$
and hence
$$
\|h_y(f(x))\|=\|Tf(y)\|=1.
$$
This shows that $\|h_y\|=1$.

For all $f$ in $\Gamma_X$ with norm one, $f\in S_{x, \mu}$ for some $x$ and $\mu$.
As a result, $Tf\in R_{y,\nu}$ for some $y$ in $Y_1$ and $\nu$ in $S_{B_y^*}$.
Thus,
$$
\nu(Tf(y))=\mu(f(x))=\|f\|=1.
$$
Therefore, $\|Tf|_{Y_1}\|=1=\|f\|=\|Tf\|$.

It remains to show that the map $\varphi$ is continuous.
Let $y_{\lambda}$ be a net converging to $y$ in $Y_1$.
If $\varphi(y_{\lambda})$ does not converge to $\varphi(y)$, then by passing to a subnet if necessary,
 we can assume it converges to an $x\neq\varphi(y)$ in $X_{\infty}$.
Let $U_1$ and $U_2$ be disjoint neighborhoods of $x$ and $\varphi(y)$ in $X_{\infty}$, respectively.
Let $f$ be an element of $S_{\varphi(y), \mu}$ supporting in $U_2$.
Then $f(\varphi(y_{\lambda}))=0$ for large $\lambda$.
By (d), $Tf(y_{\lambda})=0$ for large $\lambda$.
The definition of $\varphi$ implies that there exists a $\nu$ in $S_{B_y^*}$ such that
 $\nu(Tf(y))=\mu(f(\varphi(y)))=\|f\|=1$.
Hence, $\|Tf(y)\|=1$, contradicting to the fact $Tf(y_{\lambda})=0$ for large $\lambda$.
\end{proof}

\begin{example}\label{ex:into}
For each $\theta$ in $[0,2\pi]$, let $P_{\theta}:\mathbb{R}^2\rightarrow\mathbb{R}^2$ be the orthogonal projection
 onto the one-dimensional subspace of $\mathbb R^2$ spanned by the unit vector $(\cos\theta,\sin\theta)$.
Every element $f$ in $C(\{0\},\mathbb{R}^2)$ is given by the vector $f(0)=(r\cos t,r\sin t)$ for some $r\geq0$ and
 $t\in[0,2\pi]$.
Define a linear isometry $T:C(\{0\},\mathbb{R}^2)\rightarrow C([0,2\pi],\mathbb{R}^2)$ by
\begin{align*}
T(f)(\theta) &= P_{\theta}(f(0))=P_{\theta}(r\cos t,r\sin t)\\
             &= (r\cos(t-\theta)\cos\theta,r\cos(t-\theta)\sin\theta).
\end{align*}
In the notations of Theorem \ref{thm:into}, $h_\theta = P_\theta$, $Y_1 = Y = [0,2\pi]$, and
$$
Tf(\theta)=h_\theta(f(0)), \quad \forall \ f \in C(\{0\},\mathbb{R}^2), \ \theta \in [0,2\pi].
$$
Note that $h_{\theta}=P_{\theta}$  is not an isometry for every
$\theta$
 in $Y_1=[0,2\pi]$.
\end{example}

Here comes the proof of our main result.

\begin{proof}[Proof of Theorem \ref{thm:onto}]
Let $\langle B_X,\pi_X\rangle$ and $\langle B_Y,\pi_Y\rangle$ be two non-square Banach bundles and
 $T$ a linear isometry from $\Gamma_X$ onto $\Gamma_Y$.
Denote by
$$
K_X=\bigcup\limits_{x\in X}(\{x\}\times U_{B_x^*}) \quad \mbox{and} \quad
K_Y=\bigcup\limits_{y\in Y}(\{y\}\times U_{B_y^*}),
$$
 the disjoint unions of the compact sets $\{x\}\times U_{B_x^*}$ and $\{y\}\times U_{B_y^*}$, respectively.
Note that both the Hausdorff spaces $K_X$ and $K_Y$ are locally compact.
Define a linear isometry $\Psi:\Gamma_Y\rightarrow C_0(K_Y)$ by
$$
 \Psi(g)(y, \nu)= \nu(g(y)), \quad\forall\, g\in \Gamma_Y,  (y,\nu)\in K_Y.
$$
Then $\widetilde{T}=\Psi\circ T$ is a linear isometry from $\Gamma_X$ into $C_0(K_Y)$.
By Theorem \ref{thm:into}, there exist a continuous map $\widetilde{\varphi}$ from a nonempty subset $A_Y$ of
 $K_Y$ onto $X$ and bounded linear functionals $\widetilde{h}_{(y,\nu)}\in B_{\widetilde{\varphi}(y,\nu)}^*$ such that
\begin{align}\label{eq:3.1}
\widetilde{T}f(y,\nu)=\nu(Tf(y))=\widetilde{h}_{(y,\nu)}(f(\widetilde{\varphi}(y,\nu))),
\quad \forall\ f\in\Gamma_X, \ (y,\nu)\in A_Y.
\end{align}
Applying the same argument to $T^{-1}$, there exist a continuous map $\widetilde{\psi}$ from a subset $A_X$ of
 $K_X$ onto $Y$ and bounded linear functionals $\widetilde{k}_{(x,\mu)}$ in $B_{\widetilde{\psi}(x,\mu)}^*$ such that
$$
\mu(T^{-1}g(x))=\widetilde{k}_{(x,\mu)}(g(\widetilde{\psi}(x,\mu))),
\quad \forall\  g\in\Gamma_Y, \ (x,\mu)\in A_X.
$$

Let
$$
C_y=\{\nu\in S_{B_y^*}:(y,\nu)\in A_Y\},
$$
$$
X_I=\{x\in X:\ \mbox{there exists a}\ \mu \ \mbox{in} \ S_{B_x^*}\ \mbox{such that}\ (x,\mu)\in A_X\},
$$
and
$$
Y_I=\{y\in Y:\ \mbox{there exists a}\ \nu \ \mbox{in} \ S_{B_y^*}\ \mbox{such that}\ (y,\nu)\in A_Y\}.
$$
We make the following easy observations:
\begin{enumerate}[(I)]
    \item $X_I=X$ and $Y_I=Y$;
    \item $C_y$ is total in $B_y^*$, for all $y$ in $Y$.
\end{enumerate}

By modifying the arguments in \cite{Jeang03}, it is not difficult to show that if $\langle B_Y,\pi_Y\rangle$
 is non-square, $\widetilde{\varphi}(y,\nu_1)=\widetilde{\varphi}(y,\nu_2)$ for all $\nu_i$ in $C_y$ and
 for all $y$ in $Y$.
Consequently, we can define a continuous map $\varphi:Y\rightarrow X$ by
$$
\varphi(y)=\widetilde{\varphi}(y,\nu), \ \mbox{for some}\ \nu\in C_y.
$$
In view of \eqref{eq:3.1} and (II), we have $f(\varphi(y))=0$ implies $Tf(y)=0$.
Then there exists a bounded linear operator $h_y:B_{\varphi(y)}\rightarrow B_y$ such that
\begin{eqnarray}\label{eq:main}
Tf(y)=h_y(f(\varphi(y))), \ \forall \ f \in \Gamma_X, \ y \in Y.
\end{eqnarray}

By symmetry, $T^{-1}$ also carries a form
$$
T^{-1}g(x)=k_x(g(\psi(x))), \ \forall \ g\in\Gamma_Y, \ x\in X,
$$
 for some continuous map $\psi$ from $X$ onto $Y$, and bounded linear operators $k_x$ from $B_{\psi(x)}$ into $B_x$,
 for all $x$ in $X$.
Consequently,
$$
f(x)=(T^{-1}(Tf))(x)=k_x(Tf(\psi(x)))=k_xh_{\psi(x)}(f(\varphi(\psi(x)))).
$$
This implies that $\varphi$ is a homeomorphism with inverse $\psi$,
 and $h_y$ are bijective linear isometries with
 inverses $k_{\varphi(y)}$  for all $y$ in $Y$.

Let $\Phi=(h_y^{-1})_{y\in Y}$, i.e. $\Phi|_{B_y}=h_y^{-1}$.
Then it defines a map from $B_Y$ onto $B_X$ as follows:
 for all $b$ in $B_Y$ and $\pi_Y(b)=y_0$.
Choose a continuous section $g$ in $\Gamma_Y$ such that $g(y_0)=b$.
Then
$$
\Phi(b)=h_{y_0}^{-1}(g(y_0)) = T^{-1}(g)(\varphi(y_0)).
$$
We show that $\Phi$ is a homeomorphism from $B_Y$ to $B_X$.
By symmetry, it suffices to prove that $\Phi$ is continuous.
We shall make use of Proposition \ref{prop} in below.

Let
$b_i\rightarrow b$ in $B_Y$. We show that
$\Phi(b_i)\rightarrow\Phi(b)$ in $B_X$. Since $\pi_Y$ and $\varphi$
are continuous, we have $\pi_Y(b_i)\rightarrow\pi_Y(b)$ and
 $\varphi(\pi_Y(b_i))\rightarrow\varphi(\pi_Y(b))$.
Let $s_i=\Phi(b_i)$ and $s=\Phi(b)$. Choose a continuous section $g$
in $\Gamma_Y$ such that $g(\pi_Y(b))=b$. Then, for all $\epsilon>0$,
we have $\|g(\pi_Y(b_i))-b_i\|<\epsilon$ for all large enough $i$.
The fact $\pi_X\circ\Phi=\varphi\circ\pi_Y$ (this follows from
\eqref{eq:main}) implies that
 $\pi_X(s_i)=\pi_X(\Phi(b_i))=\varphi(\pi_Y(b_i))$ approaches $\varphi(\pi_Y(b))=\pi_X(\Phi(b))=\pi_X(s)$.
Let $u_i=\Phi(g(\pi_Y(b_i)))$ and $u=\Phi(g(\pi_Y(b)))$. Since
$\Phi|_{B_y}$ is an isometry, we have
$\|u_i-s_i\|=\|g(\pi_Y(b_i))-b_i\|<\epsilon$, for all large enough
$i$. And
$$
u_i=\Phi(g(\pi_Y(b_i)))=h_{\pi_Y(b_i)}^{-1}(g(\pi_Y(b_i)))=f(\varphi(\pi_Y(b_i))),
$$
 for some $f$ in $\Gamma_X$,
 which converges to
$$
f(\varphi(\pi_Y(b)))=h_{\pi_Y(b)}^{-1}(g(\pi_Y(b)))=\Phi(b)=u
$$
 in $B_X$.
By Proposition \ref{prop}, we have $\Phi(b_i)=s_i\rightarrow
s=\Phi(b)$ in $B_X$. This shows that $\Phi$ is continuous and
complete the proof of Theorem \ref{thm:onto}.
\end{proof}

\begin{corollary}
Assume $\langle B_X, \pi_X\rangle$ and $\langle B_Y, \pi_Y\rangle$ are two non-square Banach bundles over
locally compact Hausdorff spaces with isometrically isomorphic continuous sections.
If $\langle B_X,\pi_X\rangle$ is locally trivial, then so is
 $\langle B_Y,\pi_Y\rangle$.
\end{corollary}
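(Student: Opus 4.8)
The plan is to read the corollary off Theorem \ref{thm:onto} by transporting a local trivialisation through the bundle isomorphism it produces. Recall that $\langle B_X,\pi_X\rangle$ being \emph{locally trivial} means that every point of $X$ has an open neighbourhood $U$ for which $\langle\pi_X^{-1}(U),\pi_X\rangle$ is isomorphic, as a Banach bundle over $U$ (in the sense of Definition \ref{def:iso}, with base homeomorphism $\mathrm{id}_U$), to a trivial bundle $U\times E$ for some Banach space $E$. Now, since $\Gamma_X$ and $\Gamma_Y$ are isometrically isomorphic and both bundles are non-square, Theorem \ref{thm:onto} supplies an isometric Banach bundle isomorphism $(\Phi,\psi)\colon\langle B_X,\pi_X\rangle\to\langle B_Y,\pi_Y\rangle$: thus $\psi\colon X\to Y$ is a homeomorphism, $\pi_Y\circ\Phi=\psi\circ\pi_X$, and each $\Phi|_{B_x}\colon B_x\to B_{\psi(x)}$ is a surjective linear isometry.

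Next I would fix $y_0\in Y$, set $x_0=\psi^{-1}(y_0)$, and choose by local triviality of $\langle B_X,\pi_X\rangle$ an open neighbourhood $U$ of $x_0$, a Banach space $E$, and a bundle isomorphism $\Theta\colon\pi_X^{-1}(U)\to U\times E$ over $\mathrm{id}_U$. Put $V=\psi(U)$, an open neighbourhood of $y_0$ with $\psi^{-1}(V)=U$. From $\pi_X\circ\Phi^{-1}=\psi^{-1}\circ\pi_Y$ one gets $\Phi^{-1}(\pi_Y^{-1}(V))=\pi_X^{-1}(U)$, so the composite
$$
\pi_Y^{-1}(V)\xrightarrow{\ \Phi^{-1}\ }\pi_X^{-1}(U)\xrightarrow{\ \Theta\ }U\times E\xrightarrow{\ \psi\times\mathrm{id}_E\ }V\times E
$$
is defined. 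Each arrow is a homeomorphism, each is fibrewise a surjective linear isometry, and on base spaces they cover $\psi^{-1}|_V$, $\mathrm{id}_U$, and $\psi|_U$ respectively, so the composite covers $\mathrm{id}_V$. Hence conditions (a)--(c) of Definition \ref{def:iso} hold and $\langle\pi_Y^{-1}(V),\pi_Y\rangle$ is (isometrically) isomorphic as a bundle over $V$ to $V\times E$. Since $y_0$ was arbitrary, $\langle B_Y,\pi_Y\rangle$ is locally trivial.

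I do not expect a genuine obstacle here; the argument is essentially bookkeeping. The only points needing a line of care are (i) that $\Phi$ (hence $\Phi^{-1}$) restricts to a bundle isomorphism between $\pi_Y^{-1}(V)$ and $\pi_X^{-1}(U)=\pi_X^{-1}(\psi^{-1}(V))$, which is immediate from $\pi_Y\circ\Phi=\psi\circ\pi_X$; and (ii) that the composite of Banach bundle isomorphisms over possibly distinct base homeomorphisms is again one, with the composite base map given by composing the homeomorphisms — again immediate from Definition \ref{def:iso}. Note also that the argument is insensitive to whether ``locally trivial'' is taken with isometric or merely fibrewise-linear-homeomorphic trivialisations, since $\Phi^{-1}$ is fibrewise a surjective linear isometry and so preserves either property.
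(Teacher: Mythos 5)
Your argument is correct and is exactly the intended deduction: the paper states this corollary without proof as an immediate consequence of Theorem \ref{thm:onto}, and transporting a local trivialisation over $U$ to one over $\psi(U)$ via the bundle isomorphism $(\Phi,\psi)$ is the standard bookkeeping the authors leave to the reader.
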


The following example shows that the conclusion in Theorem \ref{thm:onto} might not hold if
 $\langle B_X,\pi_X\rangle$ or $\langle B_Y,\pi_Y\rangle$ is not non-square.

\begin{example}\label{ex:onto}
Let $\pi_i$ be the $i$-th coordinate map of $\mathbb R\oplus_{\infty}\mathbb R$, $i=1,2$.
Each element $f$ in $C(\{0\},\mathbb R\oplus_{\infty}\mathbb R)$ is given by the vector $f(0)$ in
 $\mathbb R\oplus_{\infty}\mathbb R$.
Define a linear map $T:C(\{0\},\mathbb R\oplus_{\infty}\mathbb R)\rightarrow C(\{1,2\},\mathbb R)$ by
$$
Tf(i)=\pi_i(f(0)), \quad \forall \ f\in C(\{0\},\mathbb R\oplus_{\infty}\mathbb R),\ i=1,2.
$$
It is easy to see that $T$ is an isometrical isomorphism, but $\mathbb R\oplus_{\infty}\mathbb R$ and $\mathbb R$
 are not isomorphic as Banach spaces.
In particular, $\mathbb R\oplus_{\infty}\mathbb R$ is not isometrically isomorphic to $\{1,2\}\times\mathbb R$
 as Banach bundles, although they have isometrically isomorphic continuous section spaces.
\end{example}

\end{document}